\def\diver{\mathop{\text{\normalfont div}}}
\def\cp{\operatorname{\text{cap}}}
\def\quactp{$(s,p)-$quasi everywhere}
\def\qe{$(s,p)-$q.e.}
\newcommand{\R}{\mathbb{R}}
\newcommand{\N}{\mathbb{N}}
\newcommand{\ve}{\varepsilon}
\newcommand{\ito}{\infty}
\newcommand{\A}{\mathcal{A}}
\newtheorem{teo}{Theorem}[section]
\newtheorem{coro}[teo]{Corollary}
\newtheorem{lem}[teo]{Lemma}
\newtheorem{pro}[teo]{Proposition}
\theoremstyle{definition}
\newtheorem{de}[teo]{Definition}
\theoremstyle{remark}
\newtheorem{rk}[teo]{Remark}
\numberwithin{equation}{section}
\begin{document}

\title[Continuity results with respect to domain perturbations]{Continuity results with respect to domain perturbation for the fractional $p-$laplacian}

\author[C. Baroncini, J. Fern\'andez Bonder and J.F. Spedaletti]{Carla Baroncini, Juli\'an Fern\'andez Bonder and Juan F. Spedaletti}

\address[C. Baroncini]{Departamento de Matem\'atica FCEN - Universidad de Buenos Aires and IMAS - CONICET. Ciudad Universitaria, Pabell\'on I (C1428EGA)
Av. Cantilo 2160. Buenos Aires, Argentina.}

\email{cbaronci@dm.uba.ar}

\address[J. Fern\'andez Bonder]{Departamento de Matem\'atica FCEN - Universidad de Buenos Aires and IMAS - CONICET. Ciudad Universitaria, Pabell\'on I (C1428EGA)
Av. Cantilo 2160. Buenos Aires, Argentina.}

\email{jfbonder@dm.uba.ar}

\urladdr{http://mate.dm.uba.ar/~jfbonder}

\address[J. F. Spedaletti]{Departamento de Matem\'atica, Universidad Nacional de San Luis and IMASL - CONICET. Ej\'ercito de los Andes 950 (D5700HHW), San Luis, Argentina.}

\email{jfspedaletti@unsl.edu.ar}

\subjclass[2010]{35B30, 35J60}

\keywords{Fractional $p-$laplacian, domain perturbation, fractional capacity}

\begin{abstract}
In this paper we give sufficient conditions on the approximating domains in order to obtain the continuity of solutions for the fractional $p-$laplacian. These conditions are given in terms of the fractional capacity of the approximating domains.
\end{abstract}

\maketitle

\section{Introduction.}

In recent years, there has been an increasing amount of attention in nonlocal problems due to some interesting new applications that these operators have shown to possess, such as some models for physics \cite{DGLZ, Eringen, Giacomin-Lebowitz, Laskin, Metzler-Klafter, Zhou-Du}, finances \cite{Akgiray-Booth, Levendorski, Schoutens}, fluid dynamics \cite{Constantin}, ecology \cite{Humphries, Massaccesi-Valdinoci, Reynolds-Rhodes} and image processing \cite{Gilboa-Osher}.

In particular, the so-called $(s,p)-$laplacian operator have been extensively studied and up to date is almost impossible to give an exhaustive list of references. See for instance \cite{DiNezza-Palatucci-Valdinoci, Demengel} and references therein.

The $(s,p)-$laplace operator is defined as
$$
(-\Delta_p)^s u (x) := 2\text{ p.v.} \int_{\R^n} \frac{|u(x)-u(y)|^{p-2}(u(x)-u(y))}{|x-y|^{n+sp}}\, dy,
$$
up to some normalization constant. The term p.v. stands for {\em principal value}.

It is easy to see that this operator is bounded between the fractional order Sobolev space $W^{s,p}(\R^n)$ and its dual $W^{-s,p'}(\R^n)$. Moreover, for any $u\in W^{s,p}(\R^n)$, $(-\Delta_p)^s u$ defines a distribution as
$$
\langle (-\Delta_p)^s u, \phi\rangle = \iint_{\R^n\times \R^n} \frac{|u(x)-u(y)|^{p-2}(u(x)-u(y))(\phi(x)-\phi(y))}{|x-y|^{n+sp}}\, dxdy,
$$
for every $\phi\in C^\infty_c(\R^n)$. In fact this equality holds for any $\phi\in W^{s,p}(\R^n)$. See next section for precise definitions of the Sobolev spaces $W^{s,p}(\R^n)$.

Another elementary fact is that given $f\in L^{p'}(\R^n)$ (or more generaly $f\in W^{-s,p'}(\R^n)$) and a bounded open set $\Omega\subset \R^n$ there exists a unique $u\in W^{s,p}_0(\Omega) = \{v\in W^{s,p}(\R^n)\colon v=0 \text{ a.e. in }\R^n\setminus\Omega\}$ that verifies
$$
\begin{cases}
(-\Delta_p)^s u = f & \text{in }\Omega\\
u = 0 & \text{in }\R^n\setminus \Omega,
\end{cases}
$$
where the equality is understood in the sense of distributions.

We denote this function by $u_\Omega^f$.

The question that we address in this paper is then the following. Assume that we have a sequence of domains $\{\Omega_k\}_{k\in\N}$ such that $\Omega_k\to \Omega$ in a suitable defined notion of convergence of sets. Is it then true that $u_{\Omega_k}^f\to u_\Omega^f$ in some sense? Or more generally, give necessary and/or sufficient conditions for the above statement to hold true. 

When the $(s,p)-$laplacian is replaced by the classical $p-$laplace operator $\Delta_p u:= \diver(|\nabla u|^{p-2}\nabla u)$ (recall that for $p=2$ this operator becomes the classical Laplace operator), this problem was studied in \cite{Sverak}. In that article, the author gives aditional conditions in terms of the capacity of the symmetric differences of the domains in order to obtain a positive answer, and the famous counterexample of Cioranescu and Murat \cite{C-M} says that one cannot expect a positive answer without any further assumptions.

In the fractional setting, recently \cite{C-M} extended the counterexample of Cioranescu-Murat to the $(s,p)-$laplacian so, as in the classical setting, one cannot expect a positive answer in full generality.

Therefore, our purpose in this work is to find some capacitary conditions on the symmetric diference $\Omega_k\triangle\Omega$ in order to have convergence of the solutions $u_{\Omega_k}^f\to u_\Omega^f$.

\subsection*{Organization of the paper}
After this introduction, in section 2 we revise the definitions and results on fractional order Sobolev spaces and on fractional capacities that are needed in the paper. Then, in section 3, we prove our main result (Theorem \ref{teo.main}).

\section{Preliminaries}
Let $\Omega \subset \R^n $ be an open, connected set. For $0 < s < 1 < p < \infty$, we consider the fractional order Sobolev space $W^{s,p}(\Omega)$ defined as follows

$$
W^{s,p}(\Omega) := \left\{ u\in L^p(\Omega) \colon \frac{|u(x)-u(y)|}{|x-y|^{\frac{n}{p}+s}} \in L^p(\Omega \times \Omega)\right\},
$$
endowed with the natural norm
$$
\|u\|_{W^{s,p}(\Omega)} = \|u\|_{s,p;\Omega} = \left(\int_{\Omega} |u|^p \, dx + \iint_{\Omega \times \Omega} \frac{|u(x)-u(y)|^p}{|x-y|^{n + sp}} \, dx dy \right)^\frac{1}{p}.
$$

The term

$$
[u]_{W^{s,p}(\Omega)}^p = [u]^p_{s,p; \Omega} = \iint_{\Omega \times \Omega} \frac{\left|u(x)-u(y)\right|^p}{\left|x-y\right|^{n+sp}} \, dx \, dy
$$

is called the \textit{Gagliardo seminorm} of $u$. We refer the interested reader to \cite{DiNezza-Palatucci-Valdinoci} for a throughout introduction to these spaces.

When $\Omega=\R^n$, we omit it in the notation, i.e.
$$
\|u\|_{s,p;\R^n} = \|u\|_{s,p} \text{ and } [u]_{s,p;\R^n} = [u]_{s,p}.
$$

In order to consider Dirichlet boundary conditions, it is customary to define the spaces 
$$
W_0^{s,p}(\Omega) := \{u\in W^{s,p}(\R^n)\colon u = 0 \text{ a.e. in } \R^n \setminus \Omega\}.
$$

Let us observe that $W_0^{s,p}(\Omega)$ is a closed subset of $W^{s,p}(\R^n)$. Therefore it has the same properties as a functional space. That is, $\Big(W_0^{s,p}(\Omega), \| \cdot \|_{s,p}\Big)$ is a separable, uniformly convex and reflexive Banach space.

An alternative definition for $W^{s,p}_0(\Omega)$ is to consider the closure of $C_c^\infty(\Omega)$ in $W^{s,p}(\R^n)$ with respect to the norm $\|\cdot\|_{s,p}$. If $\Omega$ is Lipschitz, both definitions are known to coincide (see \cite{DiNezza-Palatucci-Valdinoci}).

\subsection{Elementary properties}
We will now present some well-known properties of the norm that will be useful for our results. We state the results without proof for future references.

\begin{pro}[Poincar\'e Inequality]\label{poincare}
Let $\Omega\subset \R^n$ be an open set of finite measure. Then, there exists a positive constant $c = c(s,p,n,\left|\Omega\right|) > 0$ such that
$$
\|u\|_p \leq c[u]_{s,p}\ \text{for every } u \in W_0^{s,p}(\Omega).
$$
\end{pro}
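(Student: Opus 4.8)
The plan is to argue by contradiction together with a compactness argument. Suppose the inequality fails. Then for every $k\in\N$ there exists $u_k\in W_0^{s,p}(\Omega)$ with $\|u_k\|_p > k[u_k]_{s,p}$. After normalizing so that $\|u_k\|_p = 1$, we obtain a sequence with $\|u_k\|_p = 1$ and $[u_k]_{s,p} < 1/k \to 0$. In particular $\{u_k\}$ is bounded in $W^{s,p}(\R^n)$ (the $L^p$-norm of $u_k$ over $\R^n$ equals its $L^p$-norm over $\Omega$, which is $1$, and the Gagliardo seminorm is bounded), so since $W^{s,p}(\R^n)$ is reflexive we may pass to a subsequence with $u_k \rightharpoonup u$ weakly in $W^{s,p}(\R^n)$.

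The key step is then to upgrade this to strong convergence in $L^p(\Omega)$. Since $\Omega$ has finite measure, I would fix a bounded open set $\Omega'$ with $\Omega \subset\subset \Omega'$ (or simply a large ball containing a set of most of the mass) and invoke the compact embedding $W^{s,p}(\Omega') \hookrightarrow\hookrightarrow L^p(\Omega')$, valid on bounded extension domains such as balls; the functions $u_k$, extended by zero, lie in $W^{s,p}(\Omega')$ with uniformly bounded norm. Hence, up to a further subsequence, $u_k \to u$ strongly in $L^p(\Omega')$, and therefore in $L^p(\Omega)$, which forces $\|u\|_p = 1$; in particular $u\neq 0$. Moreover, the Gagliardo seminorm $[\,\cdot\,]_{s,p}$ is (lower semi)continuous under weak convergence in $W^{s,p}(\R^n)$ — it is a convex continuous functional — so
$$
[u]_{s,p} \le \liminf_{k\to\infty} [u_k]_{s,p} = 0.
$$
Thus $u(x) = u(y)$ for a.e.\ $(x,y)\in\R^n\times\R^n$, i.e.\ $u$ is a.e.\ equal to a constant; combined with $u\in L^p(\R^n)$ this forces $u \equiv 0$, contradicting $\|u\|_p = 1$. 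This contradiction proves the inequality, and the constant depends only on $s,p,n$ and $|\Omega|$ since the only quantitative input — the choice of the ambient ball and the compactness constant — depends only on these quantities.

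The main obstacle is the compactness step: one must be slightly careful that the compact embedding is applied on a \emph{fixed} nice domain (a ball) independent of $k$, using that each $u_k$ vanishes outside $\Omega$ and $|\Omega|$ is finite, rather than trying to use compactness on the possibly irregular set $\Omega$ itself. A clean alternative that avoids this subtlety is to bound $[u_k]_{s,p}$ from below by $[u_k]_{s,p;B}$ for a fixed large ball $B\supset\Omega$ and use the compact Sobolev embedding on $B$ directly; either way the vanishing of $u_k$ outside $\Omega$ is what keeps the $L^p$-mass from escaping to infinity.
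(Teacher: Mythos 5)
The paper states this proposition without proof (``We state the results without proof''), so there is no in-text argument to compare against; the standard proof is a short direct computation, not a compactness argument. As written, your proposal has two genuine gaps relative to the statement. First, $\Omega$ is only assumed to have \emph{finite measure}, not to be bounded, so there need not exist any ball $B\supset\Omega$ nor a bounded $\Omega'$ with $\Omega\subset\subset\Omega'$. Your parenthetical fallback (``a large ball containing most of the mass'') is precisely where the work would have to be done: a priori the unit $L^p$-mass of $u_k$ could migrate to infinity inside an unbounded finite-measure $\Omega$, and controlling $\int_{\Omega\setminus B_R}|u_k|^p$ uniformly in $k$ requires an extra ingredient (e.g.\ higher integrability from the fractional Sobolev embedding combined with H\"older against $|\Omega\setminus B_R|\to 0$) that you do not supply. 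Second, even where it works, the contradiction argument produces a constant depending on the particular set $\Omega$, whereas the proposition asserts $c=c(s,p,n,|\Omega|)$; your closing claim that the constant depends only on these quantities is unjustified, since the compactness constant of the embedding depends on the ambient ball, hence on $\diam \Omega$, not merely on $|\Omega|$. (The remaining steps --- weak lower semicontinuity of the convex functional $[\,\cdot\,]_{s,p}$, and the implication that $[u]_{s,p}=0$ together with $u\in L^p$ forces $u\equiv 0$ --- are fine.)

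Both gaps disappear if you argue directly, using only that $u=0$ a.e.\ on $\R^n\setminus\Omega$. Indeed,
$$
[u]_{s,p}^p \;\ge\; 2\int_{\Omega}|u(x)|^p\left(\int_{\R^n\setminus\Omega}\frac{dy}{|x-y|^{n+sp}}\right)dx ,
$$
and for each fixed $x$, since the kernel is a decreasing function of $|x-y|$ and the excluded set $\Omega$ has measure $|\Omega|$, the inner integral is minimized when that excluded set is the ball $B_\rho(x)$ with $|B_\rho|=|\Omega|$, i.e.\ $\rho=(|\Omega|/\omega_n)^{1/n}$ with $\omega_n$ the volume of the unit ball. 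Hence
$$
\int_{\R^n\setminus\Omega}\frac{dy}{|x-y|^{n+sp}} \;\ge\; \int_{|y-x|>\rho}\frac{dy}{|x-y|^{n+sp}} \;=\; \frac{n\omega_n}{sp}\,\rho^{-sp},
$$
which gives $\|u\|_p^p\le C(n,s,p)\,|\Omega|^{sp/n}[u]_{s,p}^p$ with an explicit constant depending only on $n,s,p$ and $|\Omega|$, valid for every open set of finite measure. I recommend replacing the compactness argument by this computation (or by a citation), since it delivers exactly the stated dependence of the constant and requires no boundedness of $\Omega$.
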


\begin{coro}
Let $\Omega\subset \R^n$ be an open set of finite measure. Then, $[\,\cdot\,]_{s,p}$ and $\|\cdot\|_{s,p}$ define equivalent norms in $W_0^{s,p}(\Omega)$.
\end{coro}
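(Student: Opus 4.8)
The plan is to deduce this directly from the Poincar\'e inequality of Proposition \ref{poincare}; the corollary is essentially a formal consequence, so the work is in organizing the two-sided estimate cleanly.

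First I would check that $[\,\cdot\,]_{s,p}$ is genuinely a norm on $W_0^{s,p}(\Omega)$ and not merely a seminorm. Homogeneity is clear from the definition, and the triangle inequality follows from Minkowski's inequality in $L^p(\R^n\times\R^n)$ applied to the functions $(x,y)\mapsto (u(x)-u(y))/|x-y|^{\frac np+s}$. The only nontrivial point is positive definiteness: if $u\in W_0^{s,p}(\Omega)$ satisfies $[u]_{s,p}=0$, then Proposition \ref{poincare} gives $\|u\|_p\le c\,[u]_{s,p}=0$, hence $u=0$. This is exactly where the hypothesis $|\Omega|<\infty$ enters, since it is what makes Proposition \ref{poincare} applicable.

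Next I would establish the two comparisons. One direction is immediate: since
$$
\|u\|_{s,p}^p=\|u\|_p^p+[u]_{s,p}^p\ge [u]_{s,p}^p,
$$
we get $[u]_{s,p}\le\|u\|_{s,p}$ for every $u\in W^{s,p}(\R^n)$, in particular on $W_0^{s,p}(\Omega)$. For the reverse direction I would use Proposition \ref{poincare} to absorb the $L^p$ term: for $u\in W_0^{s,p}(\Omega)$,
$$
\|u\|_{s,p}^p=\|u\|_p^p+[u]_{s,p}^p\le c^p[u]_{s,p}^p+[u]_{s,p}^p=(1+c^p)[u]_{s,p}^p,
$$
where $c=c(s,p,n,|\Omega|)$ is the Poincar\'e constant. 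Taking $p$-th roots gives $\|u\|_{s,p}\le(1+c^p)^{\frac1p}[u]_{s,p}$, and combining the two bounds shows the norms are equivalent on $W_0^{s,p}(\Omega)$ with constants depending only on $s,p,n$ and $|\Omega|$.

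I do not anticipate any real obstacle: the argument is a short chain of elementary inequalities once Proposition \ref{poincare} is granted. The only subtlety worth flagging explicitly in the write-up is that $[\,\cdot\,]_{s,p}$ fails to be a norm on all of $W^{s,p}(\R^n)$ (constants have vanishing seminorm), so the verification of positive definiteness on the subspace $W_0^{s,p}(\Omega)$ should not be skipped.
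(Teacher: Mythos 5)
Your argument is correct and is exactly the standard deduction from Proposition \ref{poincare} that the paper intends (the paper states this corollary without proof, immediately after the Poincar\'e inequality). Your explicit check that $[\,\cdot\,]_{s,p}$ is positive definite on $W_0^{s,p}(\Omega)$ --- the one point where the finite-measure hypothesis is genuinely used --- is a worthwhile addition rather than a deviation.
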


We will now define a notion of convergence of domains that will be essential for our next results. 

\begin{de}[Hausdorff complementary topology.] Let $D\subset \R^n$ be compact. Given $K_1, K_2\subset D$ compact sets, we define de Hausdorff distance $d_H$ as
$$
d_H(K_1, K_2) := \max\left\{\sup_{x\in K_1} \inf_{y\in K_2} \|x-y\|, \sup_{x\in K_2} \inf_{y\in K_1} \|x-y\|\right\}.
$$

Now, let $\Omega_1,\Omega_2\subset D$ be open sets, we define the Hausdorff complementary distance $d^H$ as
$$
d^H(\Omega_1, \Omega_2) := d_H(D \setminus \Omega_1, D \setminus \Omega_2).
$$

Finally, we say that $\{\Omega_k\}_{k\in \N}$ converges to $\Omega$ in the sense of the Hausdorff complementary topology, denoted by $\Omega_k \stackrel{H}{\to}\Omega$, if $d^H(\Omega_k, \Omega)\to 0$.

We will use the notation
$$
\A(D) := \{\Omega\subset D\colon \text{ open}\}
$$
and therefore this space has a natural structure of a metric space with metric $d^H$.
\end{de}

\begin{rk}
Is a well known fact that the space $(\A(D), d^H)$ is a compact metric space when $D$ is compact.
\end{rk}

For the proof of the following proposition, we refer to the book \cite{Henrot}.

\begin{pro}\label{convH}
If $\Omega_k\stackrel{H}{\to}\Omega$, then for every $\phi \in C_c^\infty(\Omega)$ there is an integer $k_0$ such that $\phi\in C_c^\infty(\Omega_k)$ for $k\geq k_0$.
\end{pro}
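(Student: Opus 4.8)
The plan is to reduce the statement to a purely set-theoretic claim about supports. Write $K:=\operatorname{supp}\phi$. Since $\phi\in C_c^\infty(\Omega)$, the set $K$ is a compact subset of $\Omega$ and $\phi$ is smooth on all of $\R^n$; hence, to conclude $\phi\in C_c^\infty(\Omega_k)$ it is enough to show that $K\subset\Omega_k$ for every $k$ large. We work inside the compact set $D$ from the definition of $d^H$ (we may assume $\Omega\ne\emptyset$, otherwise there is nothing to prove, so that $\Omega$, being open, is a proper subset of $D$ and $D\setminus\Omega$ is a nonempty compact set). Since $K$ is compact, $\R^n\setminus\Omega$ is closed, and the two sets are disjoint, the number $\delta:=\dist(K,\R^n\setminus\Omega)$ is strictly positive; in particular $\dist(K,D\setminus\Omega)\ge\delta>0$.

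Next I would use the hypothesis $\Omega_k\stackrel{H}{\to}\Omega$: by definition $d^H(\Omega_k,\Omega)=d_H(D\setminus\Omega_k,D\setminus\Omega)\to 0$, so there is $k_0$ with $d_H(D\setminus\Omega_k,D\setminus\Omega)<\delta$ for all $k\ge k_0$. Fix such a $k$ and suppose, towards a contradiction, that $K\not\subset\Omega_k$; pick $x\in K$ with $x\notin\Omega_k$. Since $x\in K\subset\Omega\subset D$ this gives $x\in D\setminus\Omega_k$, and then the one–sided term of the Hausdorff distance (with $K_1=D\setminus\Omega_k$, $K_2=D\setminus\Omega$) yields
\[
\dist(x,D\setminus\Omega)=\inf_{y\in D\setminus\Omega}\|x-y\|\le\sup_{x'\in D\setminus\Omega_k}\inf_{y\in D\setminus\Omega}\|x'-y\|\le d_H(D\setminus\Omega_k,D\setminus\Omega)<\delta.
\]
On the other hand $x\in K$ forces $\dist(x,D\setminus\Omega)\ge\dist(K,D\setminus\Omega)\ge\delta$, a contradiction. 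Therefore $K\subset\Omega_k$ for every $k\ge k_0$, and since $K$ is compact and $\Omega_k$ is open we get $\operatorname{supp}\phi\Subset\Omega_k$, i.e. $\phi\in C_c^\infty(\Omega_k)$, which proves the proposition.

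I do not expect a serious obstacle here; the only point requiring a little care is bookkeeping with the Hausdorff distance, namely that the relevant estimate is the one controlling every point of the perturbed complement $D\setminus\Omega_k$ by its distance to the limit complement $D\setminus\Omega$ — precisely the term $\sup_{x\in D\setminus\Omega_k}\inf_{y\in D\setminus\Omega}\|x-y\|$, which is dominated by $d_H$. The compactness of $D$ enters only to guarantee that the Hausdorff-complementary framework is well posed (so that $D\setminus\Omega$ and $D\setminus\Omega_k$ are compact and the distances above are attained and finite).
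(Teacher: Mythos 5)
Your proof is correct, and it is the standard argument (the paper itself gives no proof, deferring to the reference [Henrot], where essentially this same compactness/Hausdorff-distance reasoning appears). The key points — positivity of $\dist(\operatorname{supp}\phi,\R^n\setminus\Omega)$, the one-sided control of $D\setminus\Omega_k$ by $d_H$, and the nondegeneracy of $D\setminus\Omega$ — are all handled properly.
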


\subsection{Fractional Capacity}

In this subsection, we recall some definitions of the $(s,p)-$capacity and relative capacity that can be found, for instance, in \cite{Warma}.

For a detailed analysis of the $(s,p)-$capacity, we refer to the above mentioned article \cite{Warma}.

We start with the definition of the $(s,p)-$capacity and the relative $(s,p)-$fractional capacity. 

\begin{de}
Let $E\subset\R^n$ be an arbitrary set. We define the {\em $(s,p)-$fractional capacity} of the set $E$ as
\begin{equation}
\begin{split}
\cp_{s,p}(E)&:= \inf \{[u]_{s,p}^p\colon u\in C_c^\ito(\R^n), u\geq 1\text{ in }E\}\\ 
\end{split}
\end{equation}
\end{de}

Given $\Omega\subset\R^n$ an open and bounded set and $E\subset \Omega$, we can define the capacity of the set $E$ relative to the set $\Omega$ as follows.
\begin{de}
$$
\cp_{s,p}(E;\Omega)=\inf \{[u]_{s,p}^p\colon  u\in W_0^{s,p}(\Omega), u\ge 1\text{ in an open neighborhood of }E\}.
$$
\end{de}
\begin{rk}
It is an immediate consequence of the above definitions that $\cp_{s,p}(E)\le \cp_{s,p}(E;\Omega)$.
\end{rk}

Now, when we deal with pointwise properties of Sobolev functions we must change the concept of {\em almost everywhere} for {\em quasi everywhere}. The following definition expresses such idea.
\begin{de}
We say that an property is valid \quactp\ if it is valid except in a set of null $(s,p)-$capacity. We note this fact writing \qe
\end{de}

\begin{de}
Let $D \subset \R^n$ be an open and bounded set, we say that $\Omega \subset D$ is {\em $(s,p)-$quasi open} if there is a decreasing sequence $\{\omega_k\}_{k\in\N}$ of open sets such that $\cp_{s,p}(\omega_k,D)\to 0$ and $\Omega\cup \omega_k$ is an open set for each $k\in\N$.
\end{de}

\begin{de}
A function $u \colon \Omega \to \R$ is called an {\em $(s,p)-$quasi continuous} function if for every $\ve>0$, there is an open set $U$ such that $\cp_{s,p}(U, \Omega)<\ve$ and $u|_{\Omega \setminus U}$ is continuous.
\end{de}

The next results, which proofs can be found in \cite{Warma} will be needed in the course of the proof of the main result of this paper.

\begin{teo}[Theorem 3.7 in \cite{Warma}]
For each $u\in W^{s,p}(\R^n)$ there exists a $(s,p)-$quasicontinuous function $v\in W^{s,p}(\R^n)$ such that $u=v$ a.e. in $\R^n$.
\end{teo}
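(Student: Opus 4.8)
The plan is the classical one for constructing quasicontinuous representatives, adapted to the nonlocal seminorm $[\,\cdot\,]_{s,p}$. The two ingredients are the density of smooth functions and a weak-type (Chebyshev-type) estimate for the $(s,p)-$capacity.

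First I would record the capacitary weak-type inequality: there is a constant $C=C(n,s,p)$ such that for every $\phi\in C_c^\infty(\R^n)$ and every $\lambda>0$,
$$
\cp_{s,p}\big(\{x\colon |\phi(x)|>\lambda\}\big)\le \frac{C}{\lambda^p}\,[\phi]_{s,p}^p.
$$
This follows directly from the definition of $\cp_{s,p}$, since $|\phi|/\lambda$ (suitably truncated and mollified) is an admissible competitor for the open set $\{|\phi|>\lambda\}$, together with the scaling $[\,t\phi\,]_{s,p}^p=t^p[\phi]_{s,p}^p$; one should also note the countable subadditivity of $\cp_{s,p}$, which is again immediate from the definition by taking, for a countable family, a sum of admissible competitors and using the triangle-type estimate for the Gagliardo seminorm (this subadditivity is part of the material in \cite{Warma} and may simply be cited).

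Next, by density of $C_c^\infty(\R^n)$ in $W^{s,p}(\R^n)$, choose $u_j\in C_c^\infty(\R^n)$ with $\|u_j-u\|_{s,p}\to 0$, and pass to a subsequence (not relabelled) with $\|u_{j+1}-u_j\|_{s,p}\le 2^{-2j}$. Setting $A_j:=\{x\colon |u_{j+1}(x)-u_j(x)|>2^{-j}\}$ and $U_k:=\bigcup_{j\ge k}A_j$, the weak-type inequality gives $\cp_{s,p}(A_j)\le C\,2^{jp}\,2^{-2jp}=C\,2^{-jp}$, so by subadditivity $\cp_{s,p}(U_k)\le C\sum_{j\ge k}2^{-jp}\to 0$ as $k\to\infty$. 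On $\R^n\setminus U_k$ one has $|u_{j+1}-u_j|\le 2^{-j}$ for all $j\ge k$, hence $\{u_j\}$ converges uniformly on $\R^n\setminus U_k$; denote the limit by $v$ on $\bigcup_k(\R^n\setminus U_k)$ (a set whose complement has zero $(s,p)-$capacity) and extend $v$ arbitrarily, e.g. by $0$, elsewhere. Then $v$ restricted to $\R^n\setminus U_k$ is continuous as a uniform limit of continuous functions, so $v$ is $(s,p)-$quasicontinuous. Finally, since $u_j\to u$ in $L^p(\R^n)$ a further subsequence converges to $u$ a.e., and it also converges to $v$ q.e.\ (hence a.e., as sets of zero capacity have zero measure), so $v=u$ a.e.; in particular $v\in W^{s,p}(\R^n)$.

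The main obstacle is the weak-type capacitary estimate together with the countable subadditivity: once these are in hand the Borel–Cantelli argument for capacity is routine. The only delicate point in the estimate is producing an admissible smooth competitor with $\varphi\ge 1$ on the \emph{set} $\{|\phi|>\lambda\}$ rather than on an open neighborhood; this is handled by working with $\{|\phi|>\lambda'\}$ for $\lambda'$ slightly below $\lambda$ (an open set on which a mollified truncation of $|\phi|/\lambda'$ exceeds $1$) and letting $\lambda'\uparrow\lambda$, or simply by invoking the corresponding statement in \cite{Warma}.
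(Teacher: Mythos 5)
The paper offers no proof of this statement --- it is imported verbatim as Theorem 3.7 of \cite{Warma} --- so there is nothing internal to compare against; your argument is the standard construction of a quasicontinuous representative (weak-type capacitary estimate, density of $C_c^\infty$, telescoping series and a Borel--Cantelli argument in capacity), which is exactly the route taken in the cited reference, and it is correct. One small imprecision: justifying countable subadditivity by \emph{summing} admissible competitors only yields $\cp_{s,p}\bigl(\bigcup_i E_i\bigr)\le \bigl(\sum_i \cp_{s,p}(E_i)^{1/p}\bigr)^p$ when $p>1$, since the triangle inequality controls $[\sum_i u_i]_{s,p}$ and not its $p$-th power; the genuine subadditivity $\cp_{s,p}(\bigcup_i E_i)\le\sum_i\cp_{s,p}(E_i)$ comes from taking the pointwise supremum of nonnegative competitors and using $\sup_i|a_i|^p\le\sum_i|a_i|^p$. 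This does not damage your proof, because the capacities $\cp_{s,p}(A_j)\le C2^{-jp}$ decay geometrically and even the weaker inequality gives $\cp_{s,p}(U_k)\to 0$; still, you should either state the sup-based argument or simply cite the subadditivity from \cite{Warma} as you suggest.
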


\begin{rk}
It is easy to see  that two $(s,p)-$quasicontinuous representatives of a given function $u\in W^{s,p}(\R^n)$ can only differ in a set of zero $(s,p)-$capacity. Therefore, the unique $(s,p)-$quasicontinuous representative (defined \qe) of $u$ will be denoted by $\tilde{u}$.
\end{rk}

\begin{pro}[Lemma 3.8 in \cite{Warma}]\label{propsubsequence}
Let $0<s<1<p<\ito$. and let $\{v_k\}_{k\in \N}\subset W^{s,p}(\R^n)$ be such that $v_k\rightarrow v$ in $W^{s,p}(\R^n)$ for some $v\in W^{s,p}(\R^n)$. Then there is a subsequence $\{v_{k_j}\}_{j\in\N}\subset \{v_k\}_{k\in\N}$ such that $\tilde{v}_{k_j}\to \tilde{v}$ \qe
\end{pro}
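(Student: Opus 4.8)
The statement to prove is Proposition~\ref{propsubsequence} (Lemma 3.8 in \cite{Warma}): if $v_k\to v$ in $W^{s,p}(\R^n)$, then a subsequence of the $(s,p)$-quasicontinuous representatives $\tilde v_{k}$ converges to $\tilde v$ $(s,p)$-q.e. Since the excerpt ends exactly at this statement, I treat it as the target. The plan is to mimic the classical Sobolev-capacity argument: pass to a fast subsequence so that the norms $\|v_{k_j}-v\|_{s,p}$ decay geometrically, then use a capacitary weak-type (Chebyshev-type) inequality for the $(s,p)$-capacity together with Borel--Cantelli to obtain q.e.\ convergence of the quasicontinuous representatives.

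First I would record the key tool: a \emph{capacitary estimate} of the form $\cp_{s,p}(\{x:\ |\tilde w(x)|>\lambda\})\le C\lambda^{-p}[w]_{s,p}^p$ (equivalently with $\|w\|_{s,p}^p$) valid for every $w\in W^{s,p}(\R^n)$ with quasicontinuous representative $\tilde w$; this is a standard property of the $(s,p)$-capacity and can be quoted from \cite{Warma} (it follows from the definition of $\cp_{s,p}$ by testing against $w/\lambda$, suitably regularized, together with the quasicontinuity of $\tilde w$ so that the sublevel set behaves correctly up to a null-capacity set). Next, using $v_k\to v$ in $W^{s,p}(\R^n)$, extract a subsequence $\{v_{k_j}\}$ with $\|v_{k_j}-v\|_{s,p}^p\le 2^{-j(p+1)}$, so that $\cp_{s,p}(A_j)\le C\,2^{-j}$ where $A_j:=\{x:\ |\widetilde{v_{k_j}-v}(x)|>2^{-j}\}$ and, by the preceding remark in the excerpt, $\widetilde{v_{k_j}-v}=\tilde v_{k_j}-\tilde v$ q.e.

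Then I would set $E_m:=\bigcup_{j\ge m}A_j$. By countable subadditivity of the $(s,p)$-capacity (again from \cite{Warma}) one gets $\cp_{s,p}(E_m)\le C\sum_{j\ge m}2^{-j}=C\,2^{-m+1}\to 0$, so $E:=\bigcap_m E_m$ has zero $(s,p)$-capacity. For $x\notin E$ there is $m$ with $x\notin A_j$ for all $j\ge m$, i.e.\ $|\tilde v_{k_j}(x)-\tilde v(x)|\le 2^{-j}\to 0$; hence $\tilde v_{k_j}\to\tilde v$ everywhere off the null-capacity set $E$, which is exactly $(s,p)$-q.e.\ convergence. Finally I would absorb into $E$ the (also null-capacity) set on which $\widetilde{v_{k_j}-v}\ne\tilde v_{k_j}-\tilde v$ for some $j$, which is a countable union of null-capacity sets and hence null.

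The only genuinely delicate point is the weak-type capacitary inequality for $(s,p)$-capacity and its interaction with quasicontinuity, namely that the superlevel set of a quasicontinuous representative can be used inside the capacity infimum (one must produce admissible competitors, which requires either a smooth truncation argument or appeal to an equivalent definition of $\cp_{s,p}$ via quasicontinuous functions); everything else is the standard fast-subsequence plus Borel--Cantelli bookkeeping. Since the paper explicitly cites \cite{Warma} for this proposition, I would in fact just invoke the needed subadditivity and weak-type bounds from that reference rather than reprove them here, and present only the extraction-and-Borel--Cantelli skeleton above.
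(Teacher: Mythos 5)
The paper gives no proof of this proposition: it is quoted verbatim as Lemma 3.8 of \cite{Warma}, whose argument is exactly the fast-subsequence, weak-type capacitary estimate, and Borel--Cantelli scheme you outline. Your proposal is correct and coincides with the standard proof in the cited reference, including the correct identification of the one delicate point (justifying the estimate $\cp_{s,p}(\{|\tilde w|>\lambda\})\le C\lambda^{-p}\|w\|_{s,p}^p$ for quasicontinuous representatives, given that the capacity is defined via smooth competitors), so nothing further is needed.
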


\begin{teo}[Theorem 4.5 in \cite{Warma}]\label{teocaracterizacion}
Let $D\subset \R^n$ be an open set and $\Omega\subset D$ an open subset. Then,
$$
u\in W^{s,p}_0(\Omega) \Leftrightarrow u\in W^{s,p}_0(D)\text{ and } \tilde{u}=0  \text{ \qe\ in } D \setminus \Omega.
$$
\end{teo}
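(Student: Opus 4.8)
The reverse implication is immediate: if $u\in W^{s,p}_0(D)$ and $\tilde u=0$ \qe\ in $D\setminus\Omega$, then $u=\tilde u=0$ a.e.\ in $D\setminus\Omega$, while $u=0$ a.e.\ in $\R^n\setminus D$ by the definition of $W^{s,p}_0(D)$; since $\R^n\setminus\Omega=(\R^n\setminus D)\cup(D\setminus\Omega)$, we get $u=0$ a.e.\ in $\R^n\setminus\Omega$, i.e.\ $u\in W^{s,p}_0(\Omega)$. In the forward implication, the inclusion $W^{s,p}_0(\Omega)\subset W^{s,p}_0(D)$ is trivial from $\Omega\subset D$ (vanishing a.e.\ off $\Omega$ forces vanishing a.e.\ off $D$), so the whole content of the theorem is the assertion: if $u\in W^{s,p}_0(\Omega)$, then $\tilde u=0$ \qe\ in $D\setminus\Omega$.

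To prove this last assertion, the plan is first to reduce to $0\le u\le 1$. The maps $t\mapsto t^{\pm}$ and $t\mapsto\min(t,1)$ are $1$-Lipschitz and vanish at $0$, so $\min(u^{\pm},1)\in W^{s,p}_0(\Omega)$ with values in $[0,1]$ (these operations do not increase the Gagliardo seminorm and preserve ``$=0$ a.e.\ off $\Omega$''); moreover, for any continuous $g$, $g\circ\tilde u$ is quasicontinuous and agrees a.e.\ with $g\circ u$, hence is the quasicontinuous representative of $g\circ u$, so the representative of $\min(u^{+},1)$ equals $\min((\tilde u)^{+},1)$ \qe, and likewise with $u^{-}$. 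Since $\tilde u=0$ \qe\ on a set exactly when $\min((\tilde u)^{+},1)$ and $\min((\tilde u)^{-},1)$ both vanish \qe\ there, it suffices to treat $0\le u\le 1$. Next I would split $D\setminus\Omega=\big((\R^n\setminus\overline\Omega)\cap D\big)\cup\big(\partial\Omega\cap D\big)$. On the open set $\R^n\setminus\overline\Omega$ one has $u=0$ a.e., and by the standard localization of quasicontinuity (a quasicontinuous function that vanishes a.e.\ on an open set vanishes \qe\ there, by uniqueness of the quasicontinuous representative) we get $\tilde u=0$ \qe\ on $\R^n\setminus\overline\Omega$. Hence the crux is to prove $\tilde u=0$ \qe\ on $\partial\Omega\cap D$.

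For this I would approximate: produce $u_k\in W^{s,p}_0(\Omega)$ with $u_k\to u$ in $W^{s,p}(\R^n)$ such that each $u_k$ vanishes on an open neighborhood of $\partial\Omega\cap D$; then $\tilde u_k=0$ \qe\ on $\partial\Omega\cap D$ as above, and Proposition \ref{propsubsequence} (along a subsequence, for which $\tilde u_{k_j}\to\tilde u$ \qe) yields $\tilde u=0$ \qe\ on $\partial\Omega\cap D$, which together with the previous paragraph finishes the proof. Constructing this approximating sequence is the main obstacle. A crude cutoff $u_k=u\,\eta_k$ with $\eta_k\in C_c^\infty(\Omega)$ increasing to $1$ does \emph{not} converge to $u$ in $W^{s,p}(\R^n)$: by the nonlocality of the Gagliardo seminorm there is a fixed, non-vanishing ``interaction energy'' between $\Omega$ and $\R^n\setminus\Omega$ that such a cutoff cannot remove — this is precisely why the two definitions of $W^{s,p}_0(\Omega)$ are known to coincide only when $\Omega$ is sufficiently regular. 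I would instead fix a Lipschitz open set $D'$ with $\overline\Omega\subset D'\subset D$, use that $u\in W^{s,p}_0(D')$ and work with the relative capacity $\cp_{s,p}(\,\cdot\,,D')$, and build the $u_k$ by a capacitary argument in the spirit of Hedberg's approximation theorem: a strong-type (Chebyshev-type) inequality for $\cp_{s,p}(\,\cdot\,,D')$ applied to the dyadic superlevel pieces of $u$ near $\partial\Omega$ — whose relative capacities are controlled because $[u]_{s,p}<\infty$ — together with the Poincar\'e inequality (Proposition \ref{poincare}) to pass from the seminorm to the norm. Making this quantitative is the essential and most delicate step; everything else is the bookkeeping above.
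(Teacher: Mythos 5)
First, note that the paper does not prove this statement at all --- it is imported verbatim from \cite{Warma} --- so there is no in-paper argument to compare yours against; your proposal must stand on its own, and it does not. The entire forward direction rests on producing $u_k\in W^{s,p}_0(\Omega)$ with $u_k\to u$ in $W^{s,p}(\R^n)$ and $u_k=0$ on a neighborhood of $\partial\Omega\cap D$. You correctly identify this as the crux and then leave it as a sketch (``a capacitary argument in the spirit of Hedberg''); that step \emph{is} the fractional spectral synthesis theorem, i.e.\ essentially the whole content of the result, so the proposal is not a proof. Worse, with the definition of $W^{s,p}_0(\Omega)$ actually used in this paper --- $u=0$ a.e.\ in $\R^n\setminus\Omega$ --- which is precisely the definition your ``immediate'' reverse implication relies on, the forward implication is false, so no such $u_k$ can exist. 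Concretely, take $sp>1$, $D=B_2$, $H$ a compact piece of hyperplane contained in $B_{1/2}$ (so $|H|=0$ but $\cp_{s,p}(H)>0$), $\Omega=B_1\setminus H$, and $u\in C_c^\infty(B_1)$ with $u\equiv 1$ on $B_{1/2}$. Then $u=0$ a.e.\ in $\R^n\setminus\Omega$, hence $u\in W^{s,p}_0(\Omega)$ in the paper's sense, yet $\tilde u=u\equiv 1$ on $H\subset D\setminus\Omega$, a set of positive capacity. If the $u_k$ of your plan existed, Proposition \ref{propsubsequence} would force $\tilde u=0$ \qe\ on $H$, a contradiction.

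The statement is true when $W^{s,p}_0(\Omega)$ is understood as the closure of $C_c^\infty(\Omega)$ in $W^{s,p}(\R^n)$, which is the setting of \cite{Warma}. But then the division of labor is the opposite of yours: the forward implication is the easy one (take $\phi_k\in C_c^\infty(\Omega)$ with $\phi_k\to u$; each $\phi_k$ is its own quasicontinuous representative and vanishes on $D\setminus\Omega$, and Proposition \ref{propsubsequence} gives $\tilde u=0$ \qe\ there), while the reverse implication --- that $\tilde u=0$ \qe\ off $\Omega$ forces $u$ to be a limit of test functions supported in $\Omega$ --- is the deep step, and it is exactly the one you dismiss as immediate. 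So under either reading of the definition the proposal leaves the genuinely hard step unaddressed, and under the paper's own definition the step it reduces everything to cannot be carried out.
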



\section{Continuity of the problems with respect to variable domains}

Throughout this section we consider $0<s<1<p<\ito$ to be fixed.

Let $D\subset \R^n$ be a bounded, open set and let $\Omega\subset D$ be an open set. The Dirichlet problem for the $(s,p)-$laplacian consists of finding $u$ $\in W^{s,p}_{0}(\Omega)$ such that 
\begin{equation}\label{eq.dirichlet}
\begin{cases}
(-\Delta)_p^s u=f &\text{ in }\Omega,\\
u=0 & \text{ on } \Omega^c:= \R^n\setminus \Omega,
\end{cases}
\end{equation}
where $f\in W^{-s,p'}(D):= [W^{s,p}_0(D)]'$.

In its weak formulation, this problem consists of finding $u$ $\in W^{s,p}_{0}(\Omega)$ such that
$$
\langle (-\Delta)_p^s u, v\rangle=\langle f, v\rangle \text{ for every } v \in W^{s,p}_{0}(\Omega).
$$
That is, for every $v \in W^{s,p}_{0}(\Omega)$, the following equality holds
$$
\iint_{\R^n \times \R^n}\frac{\left|u(x)-u(y)\right|^{p-2}(u(x)-u(y))(v(x)-v(y))}{\left|x-y\right|^{n+sp}}\, dx\, dy = \langle f, v\rangle.
$$

\begin{lem}
Let $f\in W^{-s,p'}(D)$ and $\Omega \in \A(D)$. Then there exists a unique $u\in W^{s,p}_0(\Omega)$, which we will denote $u^{f}_{\Omega}$, solution of \eqref{eq.dirichlet}.
\end{lem}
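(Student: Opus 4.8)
The plan is to obtain $u^f_\Omega$ as the unique minimizer of a strictly convex, coercive energy functional over the reflexive Banach space $W^{s,p}_0(\Omega)$, and then to identify the Euler--Lagrange equation of this minimizer with the weak formulation of \eqref{eq.dirichlet}. Concretely, define the functional
\[
J(v) := \frac{1}{p}[v]_{s,p}^p - \langle f, v\rangle, \qquad v\in W^{s,p}_0(\Omega).
\]
First I would check that $J$ is well defined and finite on $W^{s,p}_0(\Omega)$: the Gagliardo seminorm term is finite since $v\in W^{s,p}(\R^n)$, and $\langle f,v\rangle$ makes sense because $f\in W^{-s,p'}(D)=[W^{s,p}_0(D)]'$ and $W^{s,p}_0(\Omega)\subset W^{s,p}_0(D)$ (as $\Omega\subset D$). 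Next I would establish coercivity: by the Poincar\'e inequality (Proposition \ref{poincare}), $\|v\|_{s,p}$ is controlled by $[v]_{s,p}$ on $W^{s,p}_0(\Omega)$, so $|\langle f,v\rangle|\le \|f\|_{W^{-s,p'}}\|v\|_{s,p}\le C\|f\|_{W^{-s,p'}}[v]_{s,p}$, and hence $J(v)\ge \frac{1}{p}[v]_{s,p}^p - C\|f\|_{W^{-s,p'}}[v]_{s,p}\to+\infty$ as $[v]_{s,p}\to\infty$. Since $[\,\cdot\,]_{s,p}$ is an equivalent norm on $W^{s,p}_0(\Omega)$, this gives coercivity in the norm topology.

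The existence of a minimizer then follows from the direct method: $J$ is bounded below by coercivity, so take a minimizing sequence $\{v_k\}$, which is bounded in $W^{s,p}_0(\Omega)$; by reflexivity (noted in the excerpt, since $W^{s,p}_0(\Omega)$ is a closed subspace of the reflexive space $W^{s,p}(\R^n)$) pass to a weakly convergent subsequence $v_k\rightharpoonup u$ with $u\in W^{s,p}_0(\Omega)$ (the subspace is weakly closed, being closed and convex). The seminorm $v\mapsto[v]_{s,p}^p$ is convex and strongly continuous on $W^{s,p}(\R^n)$, hence weakly lower semicontinuous, while $v\mapsto\langle f,v\rangle$ is weakly continuous; therefore $J(u)\le\liminf_k J(v_k)=\inf J$, so $u$ is a minimizer. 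For uniqueness I would use strict convexity: the map $t\mapsto t^p$ is strictly convex on $[0,\infty)$ for $p>1$, and combined with the strict convexity of the norm $[\,\cdot\,]_{s,p}$ (equivalently, uniform convexity of $W^{s,p}(\R^n)$, also noted in the excerpt) one gets that $v\mapsto\frac1p[v]_{s,p}^p$ is strictly convex; adding the linear term $-\langle f,v\rangle$ preserves strict convexity, so the minimizer is unique.

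Finally I would derive the Euler--Lagrange equation. For $u$ the minimizer and any $v\in W^{s,p}_0(\Omega)$, the function $t\mapsto J(u+tv)$ is differentiable at $t=0$ (one differentiates under the double integral, justifying it by dominated convergence using the pointwise inequality $\big|\,|a+tb|^p-|a|^p\,\big|\le C(|a|^{p-1}|b|+|b|^p)$ for $|t|\le 1$ together with $[u]_{s,p},[v]_{s,p}<\infty$ and H\"older), and its derivative at $t=0$ must vanish, yielding
\[
\iint_{\R^n\times\R^n}\frac{|u(x)-u(y)|^{p-2}(u(x)-u(y))(v(x)-v(y))}{|x-y|^{n+sp}}\,dx\,dy=\langle f,v\rangle
\]
for all $v\in W^{s,p}_0(\Omega)$, which is exactly the weak formulation of \eqref{eq.dirichlet}. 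Conversely any weak solution is a critical point of the strictly convex $J$, hence the minimizer, giving uniqueness of the weak solution directly as well. The main obstacle, or rather the only slightly delicate point, is the differentiability of the seminorm functional and the dominated-convergence justification for passing the derivative inside the double integral over $\R^n\times\R^n$; everything else is a routine application of the direct method in a reflexive, uniformly convex space, with Poincar\'e supplying coercivity.
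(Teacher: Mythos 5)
Your proposal is correct and follows essentially the same route as the paper: the paper's proof consists precisely of introducing the functional $\Im(v)=\frac{1}{p}[v]_{s,p}^p-\langle f,v\rangle$, noting that solutions of \eqref{eq.dirichlet} are exactly its critical points, and invoking strict convexity for uniqueness of the minimizer. You have simply filled in the standard details (coercivity via Poincar\'e, weak lower semicontinuity, the Euler--Lagrange computation) that the paper leaves implicit.
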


\begin{proof}
It is enough to consider $\Im \colon W^{s,p}_{0}(\Omega) \to \R$ defined by $\Im(v):=\frac{1}{p}[v]^{p}_{s,p}-\langle f, v\rangle$ and observe that $u$ is solution of \eqref{eq.dirichlet} if and only if $u$ is a minimizer for $\Im$.
Since $\Im$ has a unique minimizer (observe that $\Im$ is strictly convex), this completes the proof.
\end{proof}

Now we observe that these solutions $u_\Omega^f$ are bounded independently of $\Omega$.
\begin{lem}\label{cota.solucion}
There is a constant $C=C(\|f\|_{-s,p'},s,p,n,|D|)$ such that $\|u_\Omega^f\|_{s,p}\leq C$ for every $\Omega \in \A(D)$.
\end{lem}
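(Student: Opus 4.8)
The plan is to test the weak formulation of \eqref{eq.dirichlet} with the solution itself and exploit the coercivity of the Gagliardo seminorm together with the uniform Poincar\'e inequality coming from the fact that all domains sit inside the fixed bounded set $D$. Precisely, since $u_\Omega^f\in W^{s,p}_0(\Omega)$, it is an admissible test function, so
$$
[u_\Omega^f]_{s,p}^p = \iint_{\R^n\times\R^n}\frac{|u_\Omega^f(x)-u_\Omega^f(y)|^{p-2}(u_\Omega^f(x)-u_\Omega^f(y))(u_\Omega^f(x)-u_\Omega^f(y))}{|x-y|^{n+sp}}\,dx\,dy = \langle f, u_\Omega^f\rangle.
$$

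Next I would estimate the right-hand side. Since $\Omega\subset D$ we have $W^{s,p}_0(\Omega)\subset W^{s,p}_0(D)$ isometrically, so $f\in W^{-s,p'}(D)=[W^{s,p}_0(D)]'$ acts on $u_\Omega^f$ and $|\langle f, u_\Omega^f\rangle|\le \|f\|_{-s,p'}\,\|u_\Omega^f\|_{s,p}$. Here it is important that $\|f\|_{-s,p'}$ is computed using the $W^{s,p}_0(D)$ norm, which does not depend on $\Omega$; combining with the previous identity gives $[u_\Omega^f]_{s,p}^p\le \|f\|_{-s,p'}\,\|u_\Omega^f\|_{s,p}$.

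Finally I would convert this into a bound on the full norm. By the Poincar\'e inequality (Proposition \ref{poincare}) applied on the open set $D$ of finite measure, there is $c=c(s,p,n,|D|)$ with $\|v\|_p\le c[v]_{s,p}$ for all $v\in W^{s,p}_0(D)$, hence $\|v\|_{s,p}^p = \|v\|_p^p + [v]_{s,p}^p \le (c^p+1)[v]_{s,p}^p$ for every such $v$; in particular $\|u_\Omega^f\|_{s,p}\le (c^p+1)^{1/p}[u_\Omega^f]_{s,p}$, uniformly in $\Omega\in\A(D)$. Plugging this into $[u_\Omega^f]_{s,p}^p\le \|f\|_{-s,p'}\,\|u_\Omega^f\|_{s,p}$ yields $[u_\Omega^f]_{s,p}^p\le (c^p+1)^{1/p}\|f\|_{-s,p'}[u_\Omega^f]_{s,p}$, so $[u_\Omega^f]_{s,p}^{p-1}\le (c^p+1)^{1/p}\|f\|_{-s,p'}$ and therefore $[u_\Omega^f]_{s,p}\le \big((c^p+1)^{1/p}\|f\|_{-s,p'}\big)^{1/(p-1)}$; using the norm equivalence once more gives $\|u_\Omega^f\|_{s,p}\le C$ with $C$ depending only on $\|f\|_{-s,p'}$, $s$, $p$, $n$ and $|D|$.

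There is no real obstacle here; the only point to be careful about is that every ingredient must be uniform in $\Omega$, which is guaranteed because the seminorm $[\,\cdot\,]_{s,p}$ is the one on all of $\R^n$ (independent of the domain), the Poincar\'e constant depends only on $|D|\ge|\Omega|$ and can be taken on $D$, and the dual norm of $f$ is the one on the fixed space $W^{s,p}_0(D)$. One can also phrase the argument slightly more cleanly by first deriving the bound on the seminorm and invoking the Corollary that $[\,\cdot\,]_{s,p}$ and $\|\cdot\|_{s,p}$ are equivalent on $W^{s,p}_0(D)$.
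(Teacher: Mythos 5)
Your argument is correct and is essentially the same as the paper's: test the weak formulation with $u_\Omega^f$ itself to get $[u_\Omega^f]_{s,p}^p=\langle f,u_\Omega^f\rangle\le\|f\|_{-s,p'}\|u_\Omega^f\|_{s,p}$, then use the Poincar\'e inequality on $D$ to pass to the full norm and absorb. The only difference is that you spell out the uniformity in $\Omega$ explicitly, which the paper leaves implicit.
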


\begin{proof}
Let us observe that 
$$
[u^{f}_{\Omega}]_{s,p}^{p}=\langle (-\Delta)_p^s u^{f}_{\Omega}, u^{f}_{\Omega}\rangle=\langle f, u^{f}_{\Omega}\rangle\leq \|f\|_{-s,p'}\|u^{f}_{\Omega}\|_{s,p}.
$$
Combining this inequality with Theorem \ref{poincare}, there exists a constant $C>0$ such that 
$$
\|u^{f}_{\Omega}\|_{s,p}^{p}\leq C\|f\|_{-s,p'}\|u^{f}_{\Omega}\|_{s,p},
$$
from where the conclusion of the lemma follows.
\end{proof}

As an immediate corollary, we have the following result.

\begin{coro}\label{weakconvergence}
Let $\{\Omega_k\}_{k\in\N}\subset \A(D)$. Then, $\{u_{\Omega_k}^f\}_{k\in\N}$ is bounded in $W^{s,p}_{0}(D)$ and, therefore, there exists $u^{*} \in W^{s,p}_{0}(D)$ and a subsequence $\{u_{\Omega_{k_j}}^f\}_{j\in\N}\subset \{u_{\Omega_k}^f\}_{k\in\N}$ such that $u_{\Omega_{k_j}}^f \rightharpoonup u^{*}$ weakly in $W^{s,p}_{0}(D)$.
\end{coro}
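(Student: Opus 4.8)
The plan is to read the statement as a direct consequence of the uniform bound just established in Lemma~\ref{cota.solucion} together with the functional-analytic properties of $W^{s,p}_0(D)$ recalled in the Preliminaries. First I would observe that since $\Omega_k\subset D$ we have the inclusion $W^{s,p}_0(\Omega_k)\subset W^{s,p}_0(D)$ (a function vanishing a.e.\ outside $\Omega_k$ certainly vanishes a.e.\ outside $D$), so the whole sequence $\{u_{\Omega_k}^f\}_{k\in\N}$ lives in $W^{s,p}_0(D)$. Next, because each $\Omega_k\in\A(D)$, Lemma~\ref{cota.solucion} applies uniformly: there is a single constant $C=C(\|f\|_{-s,p'},s,p,n,|D|)$ with $\|u_{\Omega_k}^f\|_{s,p}\le C$ for all $k$, which is exactly the claimed boundedness.

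The second half then follows from reflexivity. Recall from the Preliminaries that $(W^{s,p}_0(D),\|\cdot\|_{s,p})$ is a reflexive Banach space. A bounded sequence in a reflexive Banach space admits a weakly convergent subsequence (Banach--Alaoglu together with reflexivity, or equivalently Eberlein--\v{S}mulian), so there exist $u^*\in W^{s,p}_0(D)$ and a subsequence $\{u_{\Omega_{k_j}}^f\}_{j\in\N}$ with $u_{\Omega_{k_j}}^f\rightharpoonup u^*$ weakly in $W^{s,p}_0(D)$. Since $W^{s,p}_0(D)$ is a closed (hence, being a subspace, weakly closed) subset of $W^{s,p}(\R^n)$, there is no issue about the limit leaving the space; the weak limit $u^*$ stays in $W^{s,p}_0(D)$.

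I do not expect any real obstacle here: this is a soft compactness argument and every ingredient has already been recorded in the text. The only point worth stating carefully is the inclusion $W^{s,p}_0(\Omega_k)\subset W^{s,p}_0(D)$, which makes it legitimate to view $\{u_{\Omega_k}^f\}$ as a single bounded sequence in the fixed space $W^{s,p}_0(D)$ rather than in the varying spaces $W^{s,p}_0(\Omega_k)$; once that is noted, the rest is the standard extraction of a weakly convergent subsequence in a reflexive space.
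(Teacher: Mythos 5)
Your argument is correct and is exactly the one the paper intends: the paper states this as an immediate corollary of Lemma \ref{cota.solucion}, relying on the uniform bound together with the reflexivity of $W^{s,p}_0(D)$ to extract a weakly convergent subsequence. Your additional remark that $W^{s,p}_0(\Omega_k)\subset W^{s,p}_0(D)$, so the sequence lives in a single fixed reflexive space, is a worthwhile clarification but does not change the route.
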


The next result is a first step in proving the continuity result.
\begin{teo}\label{u*sol}
Let $\{\Omega_k\}_{k\in\N}\subset \A(D)$ and $\Omega\in \A(D)$ be such that $\Omega_k\stackrel{H}{\to} \Omega$. Assume that  $u_{\Omega_k}^f\rightharpoonup u^*$ weakly in $W^{s,p}_0(D)$ for some $u^*\in W^{s,p}_0(D)$ when $k\to \ito$. Then 
$$
(-\Delta)_p^s u^*=f \text{ in }\Omega,
$$  
in the sense of distributions. That is 
\begin{equation}\label{debilu*}
\iint_{\R^n \times \R^n}\frac{|u^*(x)-u^*(y)|^{p-2}(u^*(x)-u^*(y))(\phi(x)-\phi(y))}{|x-y|^{n+sp}}\,dx\,dy=\langle f,\phi\rangle,
\end{equation}
for every $\phi\in C_c^\ito(\Omega)$.
\end{teo}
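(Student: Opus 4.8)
The plan is to pass to the limit in the weak formulations of \eqref{eq.dirichlet} for the domains $\Omega_k$, tested against an arbitrary but fixed $\phi\in C_c^\infty(\Omega)$. Writing $u_k:=u_{\Omega_k}^f$, Proposition \ref{convH} provides an integer $k_0$ with $\phi\in C_c^\infty(\Omega_k)\subset W_0^{s,p}(\Omega_k)$ for all $k\ge k_0$, so $\phi$ is admissible in the equation for $\Omega_k$ and
$$
\iint_{\R^n\times\R^n}\frac{|u_k(x)-u_k(y)|^{p-2}(u_k(x)-u_k(y))(\phi(x)-\phi(y))}{|x-y|^{n+sp}}\,dx\,dy=\langle f,\phi\rangle
$$
for every $k\ge k_0$. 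The whole point is to let $k\to\infty$ on the left-hand side and recover \eqref{debilu*}; since the integrand depends nonlinearly on $u_k$, the weak convergence $u_k\rightharpoonup u^*$ in $W_0^{s,p}(D)$ does not suffice by itself.

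To deal with this I would introduce $U_k(x,y):=\dfrac{u_k(x)-u_k(y)}{|x-y|^{(n+sp)/p}}$, and define $U^*$ and $\Phi$ in the same way from $u^*$ and $\phi$; note that $\Phi\in L^p(\R^n\times\R^n)$ since $[\phi]_{s,p}<\infty$, and that the displayed left-hand side equals $\iint |U_k|^{p-2}U_k\,\Phi\,dx\,dy$. By hypothesis $u_k\rightharpoonup u^*$ weakly in $W_0^{s,p}(D)$; since $D$ is bounded, the embedding $W_0^{s,p}(D)\hookrightarrow L^p(D)$ is compact (see \cite{DiNezza-Palatucci-Valdinoci}), hence $u_k\to u^*$ strongly in $L^p(D)$, and along a subsequence (not relabelled) $u_k\to u^*$ a.e.\ in $D$. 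As $u_k=u^*=0$ a.e.\ in $\R^n\setminus D$, this gives $u_k\to u^*$ a.e.\ in $\R^n$, and therefore $U_k\to U^*$ a.e.\ in $\R^n\times\R^n$. Then $|U_k|^{p-2}U_k\to|U^*|^{p-2}U^*$ a.e., while $\big\||U_k|^{p-2}U_k\big\|_{L^{p'}(\R^n\times\R^n)}^{p'}=[u_k]_{s,p}^p$ remains bounded by Lemma \ref{cota.solucion} (using $(p-1)p'=p$). Since an a.e.\ convergent sequence that is bounded in $L^{p'}$ with $1<p'<\infty$ converges weakly in $L^{p'}$, we obtain $|U_k|^{p-2}U_k\rightharpoonup|U^*|^{p-2}U^*$ weakly in $L^{p'}(\R^n\times\R^n)$.

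Pairing this weak convergence with the fixed function $\Phi\in L^p(\R^n\times\R^n)$ yields
$$
\langle f,\phi\rangle=\lim_k\iint_{\R^n\times\R^n}|U_k|^{p-2}U_k\,\Phi\,dx\,dy=\iint_{\R^n\times\R^n}|U^*|^{p-2}U^*\,\Phi\,dx\,dy,
$$
which is exactly \eqref{debilu*}; as $\phi\in C_c^\infty(\Omega)$ was arbitrary, the theorem follows. The subsequence extraction is harmless because the final identity does not mention $k$ (alternatively, every subsequence has a further subsequence along which the identity holds, so it holds for the full sequence).

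The single genuine difficulty is this nonlinear passage to the limit, and the argument really happens there: one upgrades the weak $W_0^{s,p}(D)$ convergence to a.e.\ convergence of $u_k$, hence of $U_k$, via the compact embedding, and then converts the uniform $L^{p'}$ bound on $|U_k|^{p-2}U_k$ into weak $L^{p'}$ convergence of the nonlinearity. Everything else — admissibility of $\phi$ in each $\Omega_k$, measurability and integrability of $\Phi$, the bound on $|U_k|^{p-2}U_k$ — is routine. Note finally that the hypothesis $\Omega_k\stackrel{H}{\to}\Omega$ enters only through Proposition \ref{convH}, which is consistent with the fact that the conclusion is vacuous when $\Omega=\emptyset$, as in Cioranescu--Murat type situations, where $u^*$ need not solve the equation in any larger set.
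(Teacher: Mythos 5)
Your proof is correct and follows essentially the same route as the paper's: both test the equation on $\Omega_k$ with a fixed $\phi\in C_c^\infty(\Omega)$ via Proposition \ref{convH}, upgrade the weak convergence to a.e.\ convergence through the compact embedding, and use the uniform $L^{p'}$ bound on the nonlinear kernel (your $|U_k|^{p-2}U_k$ is exactly the paper's $\xi_k$) to pass to the limit weakly in $L^{p'}$ against $\Phi\in L^p$. The only cosmetic difference is that the paper first extracts a weak $L^{p'}$ limit $\xi$ and then identifies it with the pointwise limit, while you invoke the equivalent lemma that a.e.\ convergence plus an $L^{p'}$ bound yields weak convergence directly; your explicit handling of the subsequence extraction is in fact slightly more careful than the paper's.
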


\begin{proof}
We denote $u_k = u_{\Omega_k}^f$, and also denote
$$
\xi_k(x,y) = \frac{|u_k(x)-u_k(y)|^{p-2}(u_k(x)-u_k(y))}{|x-y|^{\frac{n+sp}{p'}}}.
$$
Then, $\xi_k\in L^{p'}(\R^n\times\R^n)$ and
$$
\|\xi_k\|_{L^{p'}(\R^n\times\R^n)}^{p'} = [u_k]_{s,p}^p.
$$
Therefore, from Lemma \ref{cota.solucion}, we get that $\{\xi_k\}_{k\in\N}$ is bounded in $L^{p'}(\R^n\times\R^n)$. So, up to some subsequence, there exists a function $\xi \in L^{p'}(\R^n\times \R^n)$ such that
$$
\xi_k \rightharpoonup \xi \text{ weakly in }L^{p'}(\R^n\times \R^n).
$$
Therefore,
\begin{equation}\label{convxi}
\begin{split}
\lim_{k\to\ito}\langle (-\Delta_p)^s u_k, \phi\rangle &= \lim_{k\to\ito}\iint_{\R^n\times\R^n}\xi_k(x,y) \frac{\phi(x)-\phi(y)}{|x-y|^{\frac{n}{p}+s}}\,dx\,dy\\
&= \iint_{\R^n\times\R^n}\xi(x,y)\frac{\phi(x)-\phi(y)}{|x-y|^{\frac{n+sp}{p}}}\,dx\,dy,
\end{split}
\end{equation}
for all $\phi\in W^{s,p}(\R^n)$. In particular, \eqref{convxi} holds for every $\phi\in C_c^\ito(\Omega)$. 
Moreover, by the compactness of the immersion $W^{s,p}_0(D)\subset L^p(D)$ (see \cite{DiNezza-Palatucci-Valdinoci}), since $u_k \rightharpoonup u^*$ weakly in $W^{s,p}_0(D)$ we can conclude that $u_k\to u^*$ a.e. in $\R^n$, then
$$
\xi_k(x,y)\to \frac{|u^*(x)-u^*(y)|^{p-2}(u^*(x)-u^*(y))}{|x-y|^{\frac{n+sp}{p'}}},
$$
a.e. in $\R^n\times\R^n$, from where it follows that 
\begin{equation}\label{xi}
\xi(x,y) = \frac{|u^*(x)-u^*(y)|^{p-2}(u^*(x)-u^*(y))}{|x-y|^{\frac{n+sp}{p'}}}.
\end{equation}

Finally, observe that if $\phi\in C_c^\ito(\Omega)$ then $\phi\in C_c^\ito(\Omega_k)$ for every $k$ sufficiently large (Proposition \ref{convH}). Therefore, from \eqref{convxi} we conclude that
$$
\iint_{\R^n\times\R^n}\xi(x,y)\frac{\phi(x)-\phi(y)}{|x-y|^{\frac{n}{p}+s}}\,dx\,dy=\langle f, \phi \rangle.
$$
The proof is then completed by combining this last equality with \eqref{xi}.
\end{proof}

\begin{rk}
In order to show that $u^* = u_\Omega^f$, what remains is to show that $u^* = 0$ on $\Omega^c$. This is the hard part and is where some geometric hypotheses on the nature of the convergence of the domains needs to be made.
\end{rk}

\begin{teo}\label{teo.main}
Assume that the hypotheses of Theorem \ref{u*sol} are satisfied. If, in addition, 
\begin{equation}\label{condicion.cap}
\cp_{s,p}(\Omega_k \setminus \Omega,D)\to 0,
\end{equation} 
then $u_{\Omega_k}^f\rightharpoonup u_{\Omega}^f$ weakly in $W^{s,p}_0(D)$.
\end{teo}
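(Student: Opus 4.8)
The strategy is to combine Theorem \ref{u*sol}, which already identifies the weak limit $u^*$ as a solution of $(-\Delta)_p^s u^* = f$ in $\Omega$, with the capacitary hypothesis \eqref{condicion.cap} to show that $u^* \in W^{s,p}_0(\Omega)$. Once both facts are established, uniqueness of the solution (the first Lemma of this section) forces $u^* = u_\Omega^f$; and since every subsequence of $\{u_{\Omega_k}^f\}$ has a further subsequence converging weakly to this same limit (by Corollary \ref{weakconvergence}), the whole sequence converges weakly to $u_\Omega^f$, as desired. So the heart of the matter is the membership $u^* \in W^{s,p}_0(\Omega)$.

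By Theorem \ref{teocaracterizacion}, since $u^* \in W^{s,p}_0(D)$, it suffices to show that the quasicontinuous representative $\widetilde{u^*}$ vanishes \qe\ in $D \setminus \Omega$. First I would split $D\setminus\Omega$ as $(D \setminus (\Omega\cup\Omega_k)) \cup ((\Omega_k\setminus\Omega)\cap D)$ for each $k$. On the first piece, $u_k = u_{\Omega_k}^f \in W^{s,p}_0(\Omega_k)$ vanishes \qe\ on $\Omega_k^c \supset D\setminus(\Omega\cup\Omega_k)$, again by Theorem \ref{teocaracterizacion}. The plan is then to pass this property to the limit: by Proposition \ref{propsubsequence}, after extracting a further subsequence we have $\widetilde{u_k}\to\widetilde{u^*}$ \qe\ in $\R^n$, so $\widetilde{u^*} = 0$ \qe\ on the set of points that eventually lie in $\bigcap_{k\ge k_0} (D\setminus(\Omega\cup\Omega_k))$ for some $k_0$; one checks that $D\setminus\Omega$ minus this "good" set is contained, for each fixed $m$, in $\bigcup_{k\ge m}(\Omega_k\setminus\Omega)$, whose capacity tends to $0$ by \eqref{condicion.cap} together with subadditivity and the fact that $\cp_{s,p}(\cdot) \le \cp_{s,p}(\cdot\,;D)$. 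Hence the remaining bad set has zero $(s,p)$-capacity and $\widetilde{u^*}=0$ \qe\ in $D\setminus\Omega$.

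Concretely, I would argue as follows. Fix $\ve>0$. Choose the subsequence from Proposition \ref{propsubsequence} so that $\widetilde{u_{k_j}}\to\widetilde{u^*}$ outside a set $N$ with $\cp_{s,p}(N)=0$. For each $j$, let $E_j := (\Omega_{k_j}\setminus\Omega)\cap D$; by hypothesis and the remark that $\cp_{s,p}(E)\le\cp_{s,p}(E;D)$, we may pass to a further subsequence so that $\cp_{s,p}(E_j) \le 2^{-j}$, whence $\cp_{s,p}\big(\bigcup_{j\ge m}E_j\big)\le 2^{-m+1}$ by countable subadditivity of the capacity. Set $G_m := (D\setminus\Omega)\setminus\bigcup_{j\ge m}E_j$. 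For $x\in G_m\setminus N$ we have $x\in\Omega_{k_j}^c$ for all $j\ge m$, so $\widetilde{u_{k_j}}(x)=0$ for $j\ge m$ (using $u_{k_j}\in W^{s,p}_0(\Omega_{k_j})$ and Theorem \ref{teocaracterizacion}), and therefore $\widetilde{u^*}(x)=\lim_j \widetilde{u_{k_j}}(x)=0$. Thus $\{\widetilde{u^*}\neq 0\}\cap(D\setminus\Omega) \subset N \cup \bigcup_{j\ge m}E_j$, and letting $m\to\ito$ gives $\cp_{s,p}\big(\{\widetilde{u^*}\neq 0\}\cap(D\setminus\Omega)\big)=0$. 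By Theorem \ref{teocaracterizacion} this yields $u^*\in W^{s,p}_0(\Omega)$.

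The main obstacle, and the step that requires the most care, is the bookkeeping with nested subsequences: Theorem \ref{u*sol} already presupposes a subsequence along which $u_{\Omega_k}^f\rightharpoonup u^*$, and then Proposition \ref{propsubsequence} extracts yet another, and the summability $\cp_{s,p}(E_j)\le 2^{-j}$ demands a third. One must make sure these are compatible and that the final whole-sequence convergence statement still follows — which it does by the standard Urysohn-type argument, since \eqref{condicion.cap}, Theorem \ref{u*sol}, and the uniqueness of $u_\Omega^f$ pin down the limit independently of which subsequence one started with. A secondary point to verify is that $u^* \in W^{s,p}_0(D)$ is genuinely available (it is, from Corollary \ref{weakconvergence}) so that Theorem \ref{teocaracterizacion} applies in the direction we need.
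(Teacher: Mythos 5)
Your overall architecture is the right one (reduce to $\widetilde{u^*}=0$ \qe\ in $D\setminus\Omega$ via Theorem \ref{teocaracterizacion}, then kill the exceptional set $\bigcup_{j\ge m}(\Omega_{k_j}\setminus\Omega)$ by subadditivity and \eqref{condicion.cap}), and your capacitary bookkeeping with $\cp_{s,p}(E_j)\le 2^{-j}$ is exactly what the paper does. But there is one genuine gap at the central step: you invoke Proposition \ref{propsubsequence} to get $\widetilde{u_{k_j}}\to\widetilde{u^*}$ \qe, yet that proposition requires \emph{strong} convergence $v_k\to v$ in $W^{s,p}(\R^n)$, and all you have for $u_k=u_{\Omega_k}^f$ is weak convergence to $u^*$. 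Weak convergence only yields (via the compact embedding into $L^p$) a.e.\ convergence along a subsequence, and ``a.e.'' is useless here: the set $D\setminus\Omega$ on which you must evaluate $\widetilde{u^*}$ can perfectly well be Lebesgue-null while having positive $(s,p)$-capacity, which is precisely the situation the quasicontinuous machinery is designed to handle. So the limit passage $\widetilde{u_{k_j}}(x)=0 \Rightarrow \widetilde{u^*}(x)=0$ on $G_m\setminus N$ is not justified as written.

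The missing ingredient is Mazur's lemma: from $u_k\rightharpoonup u^*$ one extracts convex combinations $v_j=\sum_{k=j}^{N_j}a_k^j u_k$ with $v_j\to u^*$ \emph{strongly} in $W^{s,p}_0(D)$, to which Proposition \ref{propsubsequence} does apply. The key point is that the vanishing property survives convex combinations: since each $\widetilde{u_k}=0$ \qe\ on $\Omega_k^c$, one gets $\widetilde{v_j}=0$ \qe\ on $\bigcap_{k\ge j}\Omega_k^c=\bigl(\bigcup_{k\ge j}\Omega_k\bigr)^c$, and then $\widetilde{u^*}=0$ \qe\ on $E^c$ with $E=\bigcap_j\bigcup_{k\ge j}\Omega_k$; your capacity estimate then shows $\cp_{s,p}(E\setminus\Omega)=0$, which is the same endgame you propose. (It is crucial here that the convex combinations start at index $j$, so that the vanishing set does not degenerate as $j\to\ito$.) Your remarks about nested subsequences and the Urysohn-type argument for whole-sequence convergence are fine; the only real defect is the strong-versus-weak hypothesis in Proposition \ref{propsubsequence}.
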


\begin{proof}
As before, we denote $u_k = u_{\Omega_k}^f$. By Corollary \ref{weakconvergence}, $\{u_k\}_{k\in\N}$ is bounded in $W^{s,p}_{0}(D)$ and therefore we can assume that $u_k\rightharpoonup u^*$ weakly in $W^{s,p}_{0}(D)$.

By Theorem \ref{u*sol} the proof will be finished if we can prove that $u^* \in W^{s,p}_0(\Omega)$, and by Theorem \ref{teocaracterizacion}, it is enough to prove that  $\tilde{u}^* = 0$ \qe\ in $\Omega^c$. 

Consider $\tilde{\Omega}_j = \bigcup_{k\geq j} \Omega_k$ and $E = \bigcap_{j\geq 1} \tilde{\Omega}_j$.

Since  $u_k\rightharpoonup u^*$ in $W^{s,p}_0(D)$, by Mazur's Lemma (see for instance \cite{E-T}), there is a sequence $v_j = \sum_{k=j}^{N_j} a_k^j u_k$ such that $a_k^j \geq 0$, $\sum_{k=j}^{N_j} a_k^j = 1$ and $v_j \to u^*$ strongly in $W^{s,p}_0(D)$.

Since $u_k\in W^{s,p}_0(\Omega_k)$, by Theorem \ref{teocaracterizacion}, $\tilde{u}_k = 0$ \qe\ in $\Omega_k^c$. Therefore, $\tilde{v}_j=\sum_{k=j}^{N_j} a_k^j \tilde{u}_k = 0$ \qe\ in $\cap_{k=j}^{N_j} \Omega_k^c \supset \tilde{\Omega}_j^c$ for every $j\in \N$. 

Then, $\tilde{v}_j = 0$ \qe\ in $\tilde{\Omega}_j^c$ for every $j\in\N$ and, since $\tilde{\Omega}_j^c\subset \tilde{\Omega}_{j+1}^c$, we conclude that   $\tilde{v}_j = 0$ \qe\ $\tilde{\Omega}_i^c$ for every $i\le j$.

On the other hand, since $v_j \to u^*$ strongly in $W^{s,p}_0(D)$, by Proposition \ref{propsubsequence}, $\tilde{v}_{j_k}\rightarrow \tilde{u}^*$ \qe\ Then we conclude that $\tilde{u}^* = 0$ \qe\ in $E^c$.

In order to finish the proof of the theorem, we show that the capacitary condition \eqref{condicion.cap} implies that $\Omega^c\subset E^c$ up to some set of zero $(s,p)-$capacity.

In fact, since  $\cp_{s,p}(\Omega_k \setminus \Omega)\to 0$, passing to a subsequence, if necessary, we can assume that $\cp_{s,p}(\Omega_k \setminus \Omega)\leq \frac{1}{2^k}$. Therefore,
\begin{align*}
\cp_{s,p}(\tilde{\Omega}_j \setminus \Omega) = \cp_{s,p}(\cup_{k\geq j}\Omega_k \setminus \Omega) \le \sum_{k\ge j}\cp_{s,p}(\Omega_k \setminus \Omega) \le \sum_{k\geq j}\frac{1}{2^k}=\frac{1}{2^{j-1}}.
\end{align*}
Recall now that $E\subset \tilde{\Omega}_j$ for every $j\in\N$, then we have that 
$$
\cp_{s,p}(E \setminus \Omega)\le \cp_{s,p}(\tilde{\Omega}_j \setminus \Omega)\le \frac{1}{2^{j-1}} \text{ for every } j\in\N. 
$$
Taking the limit $j\to\infty$, we have that $\cp_{s,p}(E \setminus \Omega) = \cp_{s,p}(\Omega^c \setminus E^c)=0$ and the proof is finished.
\end{proof}

As a simple corollary, we can  show that the convergence of the solutions in Theorem \ref{teo.main} is actually strong.
\begin{coro}\label{fuerte}
Under the assumptions of Theorem \ref{teo.main} we have that $u_{\Omega_m}^f\to u_{\Omega}^f$ strongly in $W^{s,p}_0(D)$.
\end{coro}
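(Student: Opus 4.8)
The plan is to promote the weak convergence of Theorem~\ref{teo.main} to strong convergence by showing that the full norms converge and then invoking the uniform convexity of $W^{s,p}_0(D)$. Throughout, abbreviate $u_m=u^f_{\Omega_m}$ and $u=u^f_\Omega$. The first point to settle is that the \emph{whole} sequence $\{u_m\}_{m\in\N}$ converges weakly to $u$: by Corollary~\ref{weakconvergence} it is bounded in $W^{s,p}_0(D)$, every weakly convergent subsequence has limit $u$ by Theorem~\ref{teo.main}, and since this limit is unique the entire sequence satisfies $u_m\rightharpoonup u$ weakly in $W^{s,p}_0(D)$.

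Next I exploit the energy identity associated with \eqref{eq.dirichlet}. Since $u_m\in W^{s,p}_0(\Omega_m)$, it is an admissible test function for the weak formulation of the Dirichlet problem on $\Omega_m$, and testing with $v=u_m$ gives
$$
[u_m]^p_{s,p}=\langle (-\Delta)_p^s u_m, u_m\rangle=\langle f, u_m\rangle,
$$
and likewise $[u]^p_{s,p}=\langle f, u\rangle$, because $u=u^f_\Omega$ solves \eqref{eq.dirichlet} on $\Omega$. As $f\in W^{-s,p'}(D)=[W^{s,p}_0(D)]'$ and $u_m\rightharpoonup u$ weakly in $W^{s,p}_0(D)$, we get $\langle f, u_m\rangle\to\langle f,u\rangle$, hence $[u_m]^p_{s,p}\to[u]^p_{s,p}$. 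On the other hand, by the compactness of the immersion $W^{s,p}_0(D)\subset L^p(D)$ (already used in the proof of Theorem~\ref{u*sol}), $u_m\to u$ strongly in $L^p(D)$, so $\|u_m\|_p\to\|u\|_p$. Adding the two limits yields
$$
\|u_m\|^p_{s,p}=\|u_m\|^p_p+[u_m]^p_{s,p}\ \longrightarrow\ \|u\|^p_p+[u]^p_{s,p}=\|u\|^p_{s,p}.
$$

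Finally, $\big(W^{s,p}_0(D),\|\cdot\|_{s,p}\big)$ is a uniformly convex Banach space, and in a uniformly convex space weak convergence together with convergence of the norms forces strong convergence (the Radon--Riesz property). Applying this to $u_m\rightharpoonup u$ with $\|u_m\|_{s,p}\to\|u\|_{s,p}$ gives $u_m\to u$ strongly in $W^{s,p}_0(D)$, which is the assertion. I do not expect a genuine obstacle here: the only two points requiring a word of care are the passage from subsequential to full-sequence weak convergence (handled by uniqueness of $u^f_\Omega$) and the legitimacy of testing the equation with $u_m$ itself (which is fine precisely because $u_m\in W^{s,p}_0(\Omega_m)$). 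One could alternatively avoid uniform convexity and argue through the monotonicity of $(-\Delta)_p^s$, estimating $[u_m-u]^p_{s,p}$ by $\langle (-\Delta)_p^s u_m-(-\Delta)_p^s u,\,u_m-u\rangle\to 0$, but the norm-convergence route above is shorter.
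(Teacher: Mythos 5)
Your proof is correct and follows essentially the same route as the paper: the energy identity $[u_m]_{s,p}^p=\langle f,u_m\rangle$ together with the weak convergence gives convergence of the (semi)norms, and uniform convexity (Radon--Riesz) then upgrades weak to strong convergence. The extra steps you include (full-sequence weak convergence and the $L^p$ compactness to pass from the seminorm to the full norm) are harmless refinements of the paper's shorter argument.
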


\begin{proof}
The proof is simple. Just observe that from the weak convergence $u_{\Omega_m}^f\rightharpoonup u_\Omega^f$ given by Theorem \ref{teo.main}, we get
$$
[u_{\Omega_m}^f]_{s,p}^p = \langle f, u_{\Omega_m}^f\rangle \to \langle f, u_\Omega^f\rangle = [u_\Omega^f]_{s,p}^p.
$$
Since $W^{s,p}_0(D)$ is a uniformly convex Banach space, the result follows.
\end{proof}

\section*{Acknowledgements}
This paper was partially supported by grants UBACyT 20020130100283BA, CONICET PIP 11220150100032CO and ANPCyT PICT 2012-0153. 

'

\def\ocirc#1{\ifmmode\setbox0=\hbox{$#1$}\dimen0=\ht0 \advance\dimen0
  by1pt\rlap{\hbox to\wd0{\hss\raise\dimen0
  \hbox{\hskip.2em$\scriptscriptstyle\circ$}\hss}}#1\else {\accent"17 #1}\fi}
  \def\ocirc#1{\ifmmode\setbox0=\hbox{$#1$}\dimen0=\ht0 \advance\dimen0
  by1pt\rlap{\hbox to\wd0{\hss\raise\dimen0
  \hbox{\hskip.2em$\scriptscriptstyle\circ$}\hss}}#1\else {\accent"17 #1}\fi}
\providecommand{\bysame}{\leavevmode\hbox to3em{\hrulefill}\thinspace}
\providecommand{\MR}{\relax\ifhmode\unskip\space\fi MR }
\providecommand{\MRhref}[2]{%
  \href{http://www.ams.org/mathscinet-getitem?mr=#1}{#2}
}
\providecommand{\href}[2]{#2}
\bibliographystyle{plain}
\bibliography{biblio}

\end{document}